\theoremstyle{plain}
\newtheorem{theorem}{Theorem}[section]
\crefname{theorem}{Theorem}{Theorems}
\crefname{proposition}{Proposition}{Propositions}
\newtheorem{corollary}[theorem]{Corollary}
\crefname{corollary}{Corollary}{Corollaries}
\newtheorem{lemma}[theorem]{Lemma}
\crefname{lemma}{Lemma}{Lemmas}
\crefname{conjecture}{Conjecture}{Conjectures}
\crefname{problem}{Problem}{Problem}
\crefname{claim}{Claim}{Claims}
\crefname{observation}{Observation}{Observations}
\crefname{setup}{Setup}{Setups}
\crefname{myth}{Myth}{Myths}
\crefname{fact}{Fact}{Facts}
\crefname{algorithm}{Algorithm}{Algorithms}
\crefname{remark}{Remark}{Remarks}
\crefname{example}{Example}{Examples}
\theoremstyle{definition}
\crefname{definition}{Definition}{Definitions}
\crefname{construction}{Construction}{Constructions}
\crefname{question}{Question}{Questions}
\numberwithin{equation}{section}
\setlist[enumerate,1]{label={\upshape (\roman*)}}
\newcommand{\eps}{\varepsilon}
\newcommand{\calP}{\mathcal{P}}
\newcommand{\Oh}{\mathrm{O}}
\newcommand{\ssp}{\mathrm{ssp}}
\author[C. G. Fernandes]{Cristina G. Fernandes}
\author[C. Hoppen]{Carlos Hoppen}
\address[C. Hoppen]{Instituto de Matemática e Estatística, Universidade Federal do Rio Grande do Sul,
  Porto Alegre, Brazil.}
\email{choppen@ufrgs.br}
\author[G. Kontogeorgiou]{George Kontogeorgiou}
\address[G. Kontogeorgiou]{Centro de Modelamiento Matemático,
  Universidad de Chile, Santiago, Chile.}
\email{gkontogeorgiou@dim.uchile.cl}
\author[G. O. Mota]{Guilherme Oliveira Mota}
\address[C. G. Fernandes and G. O. Mota]{Instituto de Matemática, Estatística e Ciência da Computação,
  Universidade de São Paulo, Rua do Matão 1010, 05508-090 São Paulo, Brazil.}
\email{\{\,cris\,|\,mota\,\}@ime.usp.br}
\author[D.~Peng]{Danni Peng}
\address[D. Peng]{Instituto Nacional de Matemática
    Pura e Aplicada, Rio de Janeiro, Brazil.}
\email{danni.peng@impa.br}
\thanks{We would like to thank Maya Stein for inviting us to the $2^{nd}$ Graph Theory in the Andes Workshop, where this work was undertaken.
This study was partly supported by a joint project FAPESP and ANID
(2019/13364-7) and by CAPES (Finance Code 001).
C.G.\ Fernandes was supported by CNPq (312511/2025-6, 404315/2023-2) and FAPESP (2023/03167-5).
C.\ Hoppen was supported by CNPq (315132/2021-3 and 408180/2023-4).
G.\ Kontogeorgiou was supported by ANID (Grant CMM Basal FB210005)
and ANID-FONDECYT Postdoctorado Grant No.~3250479.
G.O.\ Mota was supported by CNPq (315916/2023-0 and 406248/2021-4) and FAPESP (2023/03167-5 and 2024/13859-4).
D. \ Peng was supported by CNPq (141537/2023-0).
}
\title[Separating paths for cubic graphs and complete bipartite graphs]
{Separating path systems for cubic graphs \\ and for complete bipartite graphs}
\begin{document}
\onehalfspace

\begin{abstract}
A strongly separating path system in a graph $G$ is a collection $\calP$ of paths in $G$ such that, for every two edges $e$ and $f$ of $G$, there is a path in $\calP$ with $e$ but not~$f$, and vice-versa. The minimum size of such a system is the so called strong separation number of $G$.
We prove that the strong separation number of every $2$-degenerate graph on $n$ vertices is at most~$n$. Using this, we also provide upper bounds for the strong separation number of subcubic graphs, planar graphs, and planar bipartite graphs. On the other hand, we prove that the strong separation number of a complete bipartite graph $K_{a,b}$ is at least~$b$ if $a<b/2$ and at least $(\sqrt{6(b/2)+4}-2)a$ if $b/2\leq a\leq b$, and we provide a construction that attains the former bound.
\end{abstract}

\maketitle

\section{Introduction}
\label{sec:intro}

Given a collection $\calP$ of paths in a graph $G$, we say that two edges 
$e,f$ in $G$ are \emph{separated} by~$\calP$ if there are two paths $P_e$ and~$P_f$ 
in $\calP$ such that $P_e$ contains $e$ but avoids~$f$, and~$P_f$ contains
$f$ but avoids $e$. We say that $\calP$ is a \emph{strongly separating path
system} of~$G$ if~$\calP$ separates every pair of edges in~$G$.
The minimum size of a strongly separating path system of $G$ is denoted by $\ssp(G)$.

Balogh, Csaba, Martin, and Pluhár~\cite{BCMP2016} conjectured that $\ssp(G)=\Oh(n)$ 
for every $n$-vertex graph~$G$, and succeeded in establishing an upper bound of $\Oh(n \log n)$ 
on $\ssp(G)$.  A substantial improvement to $\Oh(n \log^\ast n)$ was made by Letzter~\cite{Letzter2024}.
Finally, Bonamy, Botler, Dross, Naia, and Skokan~\cite{BBDNS2023} proved an upper bound of $19n$, 
settling the conjecture as true.

The bound of $19n$ is not tight and it is possible that $\ssp(G) \leq (1+o(1))\,n$ 
for every connected $n$-vertex graph $G$,\footnote{In \cite{BCMP2016}, the authors claim an example of a graph $G$ with $\ssp(G)\geq (2-\eps)n$, but their proof has a flaw that invalidates this statement: the length of a longest path in $K_{\eps n, (1 - \eps)n}$ is $2 \eps n$ and not $\eps n + 1$.}
where the connectivity condition is necessary, as the graph consisting of $n/4$ disjoint 
copies of $K_4$ requires $5n/4$ paths to be strongly separated. Recently, the first and 
fourth authors together with Sanhueza-Matamala~\cite{FSMM25} proved that
$\ssp(K_n) = (1+o(1))n$ and $\ssp(K_{\frac{n}{2}, \frac{n}{2}})=(\sqrt{5/2} - 1 + o(1))\,n$, and 
in general obtained the asymptotic value of $\ssp(G)$ for $\alpha n$-regular $n$-vertex graphs 
that are \emph{robustly connected}. For complete graphs, this result was further improved by 
the third author and Stein~\cite{kontogeorgiou2024exactupperboundminimum}, who constructed a 
strongly separating path system for~$K_n$ with at most $n+9$ paths. In this work, we continue 
this line of research by establishing new bounds for 2-degenerate graphs (Section \ref{sec2}) 
and for complete bipartite graphs (Section \ref{sec4}).

A graph $G$ is \emph{$2$-degenerate} if every subgraph of~$G$ contains a vertex of degree at 
most~$2$. The class of 2-degenerate graphs includes, for instance, all outerplanar graphs, 
all series-parallel graphs, all strictly subcubic graphs, and all planar graphs of girth 
at least~6. Our first main result is the following.

\begin{theorem}\label{thm:2degenerate}
  For every $2$-degenerate $n$-vertex graph $G$, we have $\ssp(G) \leq n$.
\end{theorem}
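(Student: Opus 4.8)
The plan is to induct on $n$ via a degeneracy ordering, building a path system in which \emph{every edge lies in exactly two paths and all the resulting two-element ``signatures'' are distinct}, where the signature of an edge $e$ is the set of paths containing $e$. Such a system is automatically strongly separating: two distinct two-element sets are always incomparable, so for any edges $e\neq f$ there is a path in $\sigma(e)\setminus\sigma(f)$ (containing $e$ but not $f$) and a path in $\sigma(f)\setminus\sigma(e)$. Since $G$ is $2$-degenerate I would order its vertices $v_1,\dots,v_n$ so that each $v_i$ has at most two neighbours among $v_1,\dots,v_{i-1}$, its \emph{back-neighbours}, and insert the vertices in this order, adding exactly one new path $P_i$ at step $i$; the final system then has at most $n$ paths. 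The invariants I would maintain are: (I1) every edge seen so far is in exactly two paths, with all signatures distinct; and (I2) every vertex already incident with an edge is an endpoint of at least two paths, while each still-isolated vertex carries its own trivial single-vertex path.

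For the inductive step, when inserting $v=v_i$ with back-neighbours $a,b$ I introduce the new path $P_i=a\,v\,b$ (or $a\,v$ if $v$ has a single back-neighbour, or the trivial path on $v$ if none). To give the new edge $va$ a second path I pick an existing path $P^a$ ending at $a$ and append the edge $av$; likewise I extend a path $P^b$ ending at $b$ by $bv$. Appending an edge at an endpoint of a path neither removes old edges from that path nor changes any other path, so all old signatures are preserved; the new edges receive the pairs $\{P_i,P^a\}$ and $\{P_i,P^b\}$. These contain the fresh index $i$, hence differ from every old signature, and they differ from each other precisely when $P^a\neq P^b$. This establishes (I1), provided the extensions can be chosen with $P^a\neq P^b$.

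The main obstacle, and the reason for (I2), is exactly guaranteeing that $P^a$ and $P^b$ can be taken distinct. The only bad configuration is a single path whose two endpoints are precisely $a$ and $b$ with no other path ending at either. Invariant (I2) rules this out: a vertex that already carries an edge is an endpoint of at least two paths, and a still-isolated back-neighbour contributes only its private single-vertex path, which ends at that vertex alone and so cannot coincide with a path ending at the other back-neighbour. In every case two distinct choices therefore exist.

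Finally I would verify that the step preserves (I2). Writing $t(u)$ for the number of paths with $u$ as an endpoint, the new path $a\,v\,b$ contributes one endpoint at $a$ and one at $b$, exactly compensating the two endpoints consumed by extending $P^a$ and $P^b$; thus $t(a)$ and $t(b)$ are unchanged, while $v$ becomes an endpoint of the two extended paths, giving $t(v)=2$. Consequently $t$ is non-decreasing at every vertex, and each vertex reaches endpoint-multiplicity two as soon as it acquires its first edge and never drops below it, so (I2) holds throughout. Since one path is created per vertex, the system has at most $n$ paths; discarding any edgeless paths that survive (they contain no edges and so affect no signature) leaves a strongly separating path system of size at most $n$, proving $\ssp(G)\le n$.
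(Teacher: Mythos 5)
Your proof is correct and is essentially the paper's own argument: both construct the system along a $2$-degeneracy ordering, creating one new path through each inserted (or, in the paper, removed) vertex and extending two distinct existing paths ending at its back-neighbours, with your invariants (I1)--(I2) corresponding to the paper's properties that every edge lies in exactly two paths and every vertex is an endpoint of two paths. The only organizational difference is that you handle isolated vertices and disconnected intermediate graphs uniformly via trivial one-vertex paths---which requires reading the system as an indexed family, since a component consisting of a single edge temporarily yields two identical paths---whereas the paper avoids this by restricting to connected components on at least three vertices and treating the cut-vertex case separately.
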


Our proof is inductive and yields a separating path system $\mathcal{P}$ with some very convenient properties. Namely, if every connected component of $G$ contains at least three vertices, then $\mathcal{P}$ is such that every edge of $G$ is contained in exactly two of its paths, and every vertex of $G$ is the endpoint of exactly two of its paths. In Section~\ref{sec3}, we apply Theorem~\ref{thm:2degenerate} to obtain upper bounds for $\ssp(G)$ when $G$ is subcubic, planar, or planar bipartite.

We also study separating path systems for unbalanced complete bipartite graphs $K_{a,b}$, $a \leq b$. If $a<b/2$, we show that $\ssp(K_{a,b})=b$. For $a\geq b/2$, we establish a lower bound of $(\sqrt{6(b/a)+4}-2)a$ on $\ssp(K_{a,b})$. This bound is tight at the extremes, that is, when~$a = b/2$ and when $a=b$.

In what follows, we refer to a path just by its sequence of vertices. That is, for a path with vertex set $\{v_1,\ldots,v_\ell\}$ where $v_iv_{i+1}$ is an edge for every $1 \leq i \leq \ell-1$, we write $P=(v_1,\dots,v_\ell)$.

\section{2-Degenerate graphs} \label{sec2}
\label{sec:2degenerate}

In this section, we prove that $\ssp(G) \leq n$ for every 2-degenerate $n$-vertex graph~$G$. 
In fact, we prove a stronger statement which is useful for our other results and gives an 
extra structural property for our strongly separating path systems for $2$-degenerate graphs.


\begin{theorem}\label{thm:2degenerate-strong}
  For every connected $2$-degenerate graph $G$ with at least three vertices, 
  there is a strongly separating path system for $G$ with $n$ paths and
  the following two properties:
  \begin{enumerate}
  \item every edge lies in exactly two paths;\label{item:i}
  \item there are exactly two paths ending at each vertex of~$G$.\label{item:ii}
  \end{enumerate}
\end{theorem}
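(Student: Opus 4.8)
The plan is to induct on the number of vertices $n$. For the base case $n=3$, a connected $2$-degenerate graph is either the path $(a,b,c)$ or the triangle on $\{a,b,c\}$; in the first case the three paths $(a,b,c),(a,b),(b,c)$ work, and in the second the three paths $(a,b,c),(b,c,a),(c,a,b)$ work, as one checks directly that each collection has three paths, places every edge in exactly two paths, ends exactly two paths at each vertex, and separates every pair of edges. Throughout the induction I would maintain the extra invariant that every path has at least one edge, which rules out degenerate corner cases.

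For the inductive step with $n\ge 4$, I would locate a vertex $v$ of degree at most $2$ (which exists by $2$-degeneracy) and remove it, applying the central operation of \emph{extending endpoint-paths to $v$ and adding a short connector path through $v$}. If $v$ is a leaf with neighbour $u$, then $G-v$ is connected on $n-1\ge 3$ vertices; applying the induction hypothesis to obtain a system $\calP'$, I would take one of the two paths $P$ ending at $u$, replace it by its extension $Pv$, and add the connector $(u,v)$. If instead $v$ has degree $2$ with neighbours $u_1,u_2$, I would take a path $P$ ending at $u_1$ and a \emph{distinct} path $Q$ ending at $u_2$ (always possible, since each of $u_1,u_2$ is the endpoint of exactly two paths), replace them by $Pv$ and $Qv$, and add the connector $(u_1,v,u_2)$. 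In each case the net effect is to add exactly one path, so the new system has $n$ paths; the connector restores the endpoint counts at $u$ (respectively $u_1,u_2$) that the extensions removed, and supplies the second occurrence of each new edge, so properties~(i) and~(ii) are preserved. Separation is preserved because every modification only \emph{appends} new edges incident to $v$ to existing paths or introduces paths using \emph{only} new edges: old pairs stay separated since no old edge is ever deleted from a path, the connector (containing no old edge) separates each new edge from every old one, and the extended paths $Pv$, $Qv$ separate the two new edges from each other.

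The step I expect to be the main obstacle is guaranteeing that the induction hypothesis actually applies after removing a degree-$2$ vertex, i.e.\ that $G-v$ decomposes into connected $2$-degenerate pieces each on at least three vertices. I would first arrange the case analysis so that a vertex of degree at most $1$ is removed whenever one exists; thus when I remove a degree-$2$ vertex I may assume $G$ has minimum degree at least $2$. Under this assumption I claim no component of $G-v$ has fewer than three vertices: a singleton component would be a vertex adjacent only to $v$, hence a leaf, contradicting minimum degree $2$; and a two-vertex component $\{a,b\}$ would force each of $a,b$ to have all its neighbours in $\{v,a,b\}$, making $\{v,a,b\}$ a triangle component and so $G$ itself a triangle, contradicting $n\ge 4$. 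Since $v$ has only two neighbours, $G-v$ has at most two components, each containing a neighbour of $v$; the one-component case is handled by the degree-$2$ operation above applied within $\calP'$, and the two-component case by the same operation applied to the disjoint union of the two systems supplied by the induction hypothesis (here $P\ne Q$ is automatic, as they lie in different components). Verifying that this small-component exclusion is airtight, and checking the counting and separation bookkeeping uniformly across the leaf, one-component, and two-component cases, is where the real care is needed.
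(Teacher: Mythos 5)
Your proposal is correct and follows essentially the same route as the paper's proof: the identical base case at $n=3$, and the same inductive operation of extending one (or two distinct) endpoint-paths to the removed low-degree vertex $v$ and adding the connector $(u,v)$ or $(u_1,v,u_2)$, with the same case split according to whether $G-v$ is connected or splits into two components. If anything, you are slightly more explicit than the paper in verifying that each component of $G-v$ has at least three vertices (so the induction hypothesis applies), a point the paper asserts without proof; your separation argument, while compressed, relies on the same key fact---property (i) guarantees each old edge lies in a path avoiding any given new edge---that the paper spells out.
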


Before going into the proof, let us argue that this implies that $\ssp(G) \leq n$ for every 2-degenerate $n$-vertex graph~$G$.
Indeed, let $G_1,\ldots,G_k$ be the connected components of~$G$.
For each $G_i$ with at least three vertices, take the strongly separating path system given by Theorem~\ref{thm:2degenerate-strong}.
For a $G_i$ that consists of a single edge, take the strongly separating path system consisting of a single path of length~1 (the edge itself),
and for a $G_i$ that consists of a single vertex, take the empty separating path system.
Each such path system has at most $|V(G_i)|$ paths, and 
the union of these path systems is a strongly separating path system for $G$,
hence $\ssp(G) \leq \sum_{i=1}^k|V(G_i)| = n$.

\begin{proof}[Proof of Theorem~\ref{thm:2degenerate-strong}.]
  The proof is by induction on~$n$. If $n = 3$, then $G$ is either a path of length $2$ or a triangle.
  If $G$ is a path of length~$2$, say $(v_1,v_2,v_3)$, the family $\{(v_1,v_2,v_3),\,(v_1,v_2),\\(v_2,v_3)\}$
  is a strongly separating path system for $G$ satisfying~\ref{item:i} and~\ref{item:ii}.
  If $G$ is a triangle with vertices $\{v_1,v_2,v_3\}$, then our strongly separating path system
  satisfying~\ref{item:i} and~\ref{item:ii} is $\{(v_1,v_2,v_3),\,(v_2,v_3,v_1),\,(v_3,v_1,v_2)\}$.

  For the induction step, consider $n \geq 4$, and assume that the result holds
  for any connected 2-degenerate graph with fewer than~$n$ vertices.
  
  Firstly, suppose that $G$ has a vertex $v$ of degree at most $2$ for which
  $G-v$ is connected. If~$v$ has degree~1, consider a separating path
  system $\calP'$ for $G-v$ satisfying~\ref{item:i} and~\ref{item:ii}, of size~$|\calP'|=n-1$,
  and let $P_u$ be one of the paths in $\calP'$ that ends at the neighbor
  $u$ of~$v$. Let $P$ be the path that extends $P_u$ to $v$ (see Figure~\ref{fig:3cases}(a)).
  The path family $\calP:=(\calP'\setminus\{P_u\})\cup\{P,(u,v)\}$ is a strongly
  separating path system such that every edge lies in exactly two paths and
  there are exactly two paths ending at each vertex of~$G$, that is,
  $\calP$ satisfies \ref{item:i} and~\ref{item:ii}.
  If $v$ has degree $2$, let $u$ and $w$ be its neighbors,
  and let~$\calP'$ be a strongly separating path system for $G-v$ of
  size $|\calP'|=n-1$ satisfying \ref{item:i} and~\ref{item:ii}.
  Let $P_u$ be one of the paths in $\calP'$ that ends in $u$ and let $P_w$ be a
  path in $\calP'$ that ends in $w$ with the property that $P_w \neq P_u$.
  Let~$P_1$ and $P_2$ be the paths that extend $P_u$ and~$P_w$ to $v$
  using the edges $uv$ and $vw$, respectively (see Figure~\ref{fig:3cases}(b)).
  The path family $\calP:=(\calP'\setminus\{P_u,P_w\})\cup\{P_1,P_2,(u,v,w)\}$ has
  the property that every edge lies in exactly two paths and
  there are exactly two paths ending at each vertex of~$G$.
  Note that $P_1$ and~$P_2$ strongly separate $uv$ from~$vw$.
  The edges $uv$ and $vw$ are separated from the remaining edges by $(u,v,w)$.
  Finally, by induction, any edge~$e'$ in $G-v$ lies in a path other than $P_u$
  (and so corresponds to a path in~$\calP$ that avoids $uv$) and lies
  in a path other than $P_w$ (and so corresponds to a path in~$\calP$
  that avoids $vw$). Therefore, $\calP$ is a strongly separating path system 
  for~$G$ satisfying properties~\ref{item:i} and~\ref{item:ii}.

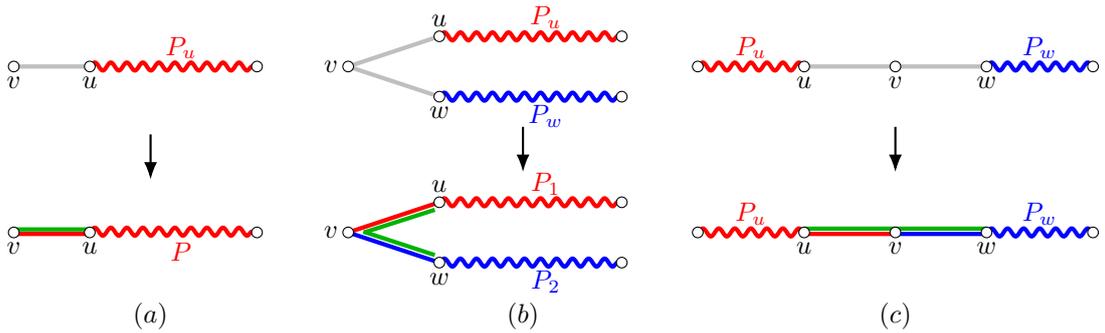
\begin{figure}[htbp]
  \centering
  \begin{tikzpicture}[scale=1, every node/.style={font=\small}]
    \draw[lightgray, ultra thick] (-2,3.2) -- (-1,3.2);
    \draw[decorate, decoration={snake, segment length=6pt, amplitude=1.5pt}, red, ultra thick] (-1,3.2) -- (1.2,3.2);
    \filldraw[fill=white, draw=black] (-2,3.2) circle (2pt) node[below] {$v$};
    \filldraw[fill=white, draw=black] (-1,3.2) circle (2pt) node[below] {$u$};
    \filldraw[fill=white, draw=black] (1.2,3.2) circle (2pt);
    \node[red] at (0.2,3.45) {$P_u$};

    \draw[thick, -{Latex}] (-0.2,2.3) -- (-0.2,1.7);
    \node[black] at (-0.2,-0.1) {$(a)$};

    \draw[green!70!black, ultra thick] (-2,1.04) -- (-1,1.04);
    \draw[red, ultra thick] (-2,0.98) -- (-1,0.98);
    \draw[decorate, decoration={snake, segment length=6pt, amplitude=1.5pt}, red, ultra thick] (-1,1.01) -- (1.2,1.01);

    \filldraw[fill=white, draw=black] (-2,1) circle (2pt) node[below] {$v$};
    \filldraw[fill=white, draw=black] (-1,1) circle (2pt) node[below] {$u$};
    \filldraw[fill=white, draw=black] (1.2,1) circle (2pt);
    \node[red] at (0.2,0.75) {$P$};

    \begin{scope}[xshift=4.8cm,yshift=0mm]
      \draw[lightgray, ultra thick] (-2.4,3.2) -- (-1.2,3.6); 
      \draw[lightgray, ultra thick] (-2.4,3.2) -- (-1.2,2.8); 

      \draw[decorate, decoration={snake, segment length=6pt, amplitude=1.5pt}, red, ultra thick] (-1.2,3.6) -- (1.2,3.6);
      \draw[decorate, decoration={snake, segment length=6pt, amplitude=1.5pt}, blue, ultra thick] (-1.2,2.8) -- (1.2,2.8);

      \filldraw[fill=white, draw=black] (-2.4,3.2) circle (2pt) node[left] {$v$};
      \filldraw[fill=white, draw=black] (-1.2,3.6) circle (2pt) node[above] {$u$};
      \filldraw[fill=white, draw=black] (-1.2,2.8) circle (2pt) node[below] {$w$};
      \filldraw[fill=white, draw=black] (1.2,3.6) circle (2pt);
      \filldraw[fill=white, draw=black] (1.2,2.8) circle (2pt);

      \node[red] at (0.2,3.85) {$P_u$};
      \node[blue] at (0.2,2.55) {$P_w$};

      \draw[thick, -{Latex}] (-0.1,2.4) -- (-0.1,1.8);
      \node[black] at (-0.1,-0.1) {$(b)$};

      \draw[red, ultra thick] (-2.4,1.0) -- (-1.2,1.4);
      \draw[blue, ultra thick] (-2.4,1.0) -- (-1.2,0.6);
      \draw[green!70!black, ultra thick] (-2.19,1   ) -- (-1.26,1.3);
      \draw[green!70!black, ultra thick] (-2.2 ,1.01) -- (-1.26,0.7);

      \draw[decorate, decoration={snake, segment length=6pt, amplitude=1.5pt}, red, ultra thick] (-1.2,1.4) -- (1.2,1.4);
      \draw[decorate, decoration={snake, segment length=6pt, amplitude=1.5pt}, blue, ultra thick] (-1.2,0.6) -- (1.2,0.6);

      \filldraw[fill=white, draw=black] (-2.4,1.0) circle (2pt) node[left] {$v$};
      \filldraw[fill=white, draw=black] (-1.2,1.4) circle (2pt) node[above] {$u$};
      \filldraw[fill=white, draw=black] (-1.2,0.6) circle (2pt) node[below] {$w$};
      \filldraw[fill=white, draw=black] (1.2,1.4) circle (2pt);
      \filldraw[fill=white, draw=black] (1.2,0.6) circle (2pt);

      \node[red] at (0.2,1.65) {$P_1$};
      \node[blue] at (0.2,0.35) {$P_2$};
    \end{scope}

    \begin{scope}[xshift=10.2cm,yshift=-10mm]
      \draw[lightgray, ultra thick] (-1.8,4.2) -- (-0.6,4.2); 
      \draw[lightgray, ultra thick] (0.6,4.2) -- (-0.6,4.2);  

      \draw[decorate, decoration={snake, segment length=6pt, amplitude=1.5pt}, red, ultra thick] (-3.2,4.2) -- (-1.8,4.2); 
      \draw[decorate, decoration={snake, segment length=6pt, amplitude=1.5pt}, blue, ultra thick] (0.6,4.2) -- (2.0,4.2);  

      \filldraw[fill=white, draw=black] (-3.2,4.2) circle (2pt);
      \filldraw[fill=white, draw=black] (-1.8,4.2) circle (2pt) node[below] {$u$};
      \filldraw[fill=white, draw=black] (-0.6,4.2) circle (2pt) node[below] {$v$};
      \filldraw[fill=white, draw=black] ( 0.6,4.2) circle (2pt) node[below] {$w$};
      \filldraw[fill=white, draw=black] ( 2.0,4.2) circle (2pt);

      \node[red] at (-2.5,4.45) {$P_u$};
      \node[blue] at (1.3,4.45) {$P_w$};

      \draw[thick, -{Latex}] (-0.6,3.4) -- (-0.6,2.8);
      \node[black] at (-0.6,0.9) {$(c)$};

      \draw[decorate, decoration={snake, segment length=6pt, amplitude=1.5pt}, red, ultra thick] (-3.2,2) -- (-1.8,2); 
      \draw[red, ultra thick] (-1.8,1.98) -- (-0.6,1.98);                       
      \draw[green!70!black, ultra thick] (-1.8,2.05) -- ( 0.6,2.05);            
      \draw[blue, ultra thick] (-0.6,1.98) -- (0.6,1.98);                       
      \draw[decorate, decoration={snake, segment length=6pt, amplitude=1.5pt}, blue, ultra thick] (0.6,2) -- (2.0,2); 

      \filldraw[fill=white, draw=black] (-3.2,2) circle (2pt);
      \filldraw[fill=white, draw=black] (-1.8,2) circle (2pt) node[below] {$u$};
      \filldraw[fill=white, draw=black] (-0.6,2) circle (2pt) node[below] {$v$};
      \filldraw[fill=white, draw=black] ( 0.6,2) circle (2pt) node[below] {$w$};
      \filldraw[fill=white, draw=black] ( 2.0,2) circle (2pt);

      \node[red] at (-2.5,2.25) {$P_u$};
      \node[blue] at (1.3,2.25) {$P_w$};
    \end{scope}
\end{tikzpicture}
\caption{Illustration of the cases in the proof of Theorem~\ref{thm:2degenerate-strong}.}
\label{fig:3cases}
\end{figure}

  Next, suppose that the removal of any vertex $v$ with degree at most~$2$ disconnects~$G$. 
  This means that the minimum degree of~$G$ is~2 and that the removal of a vertex~$v$
  of degree~2 produces two components~$G_1$ and~$G_2$ of size $n_1$ and $n_2$, where each component has
  a single neighbor of~$v$ and $n_1+n_2=n-1$. Note that $n_1, n_2 \geq 3$, as otherwise there would be
  a vertex of degree~1 in~$G$ (either the neighbor of~$v$ or the second vertex in such a small component).
  By induction, let $\calP_1$ and $\calP_2$ be strongly separating path systems
  for $G_1$ and~$G_2$, respectively, with the required properties.
  Let~$u$ and $w$ be the neighbors of $v$ in $G_1$ and~$G_2$, respectively,
  and consider paths $P_u$ in~$\calP_1$ ending at~$u$ and $P_w$ in $\calP_2$
  ending at $w$. Let $P_1$ and~$P_2$ be the paths that extend~$P_u$ and $P_w$
  to~$v$ using the edges $uv$ and $vw$, respectively (see Figure~\ref{fig:3cases}(c)).
  Consider the path family
  $\calP:=(\calP_1\cup \calP_2 \setminus\{P_u,P_w\}) \cup \{P_1,P_2,(u,v,w)\}$.
  It is easy to check that $\calP$ is a strongly separating path system for $G$
  satisfying properties \ref{item:i} and~\ref{item:ii}.
\end{proof}

Let us conclude this section with a simple lemma that will allow us to
find separating path systems with at most $n$ paths for $n$-vertex
connected cubic graphs other than $K_4$ in the next section.

\begin{lemma}\label{obs:2degenerate}
  Let $G$ be a connected cubic $n$-vertex graph that is not $K_4$.  If
  $e=uv$ is an edge of~$G$ that is not in a triangle, then there is a
  strongly separating path system for~$H:=G-e$ with at most~$n$ paths,
  satisfying properties \ref{item:i} and~\ref{item:ii} from
  Theorem~\ref{thm:2degenerate-strong}. Furthermore, such system
  contains two paths of length $2$, one with internal vertex~$u$ and
  the other with internal vertex~$v$.
\end{lemma}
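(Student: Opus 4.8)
The plan is to reconstruct the inductive argument behind \Cref{thm:2degenerate-strong}, forcing $u$ and $v$ to be the first two vertices peeled off, so that each one becomes the internal vertex of a length-$2$ path. First I record the local structure of $H:=G-e$. Here $u$ and $v$ have degree $2$ and every other vertex has degree $3$; write $a,b$ for the neighbours of $u$ in $H$ and $c,d$ for those of $v$. Since $e=uv$ lies in no triangle, $u$ and $v$ have no common neighbour in $G$, so $\{a,b\}\cap\{c,d\}=\emptyset$ and none of $a,b,c,d$ equals $u$ or $v$. In particular deleting $u$ leaves the degree of $v$ unchanged, and vice versa, so after removing $u$ the vertex $v$ still has degree~$2$.

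Next I would verify that $H$, and every graph arising in the construction, is $2$-degenerate. Any subgraph of $H$ of minimum degree at least $3$ must avoid $u$ and $v$ (each of degree $2$ in $H$), so every one of its vertices would have all three of its $G$-edges inside it; such a vertex set is a union of connected components of $G$ avoiding $u,v$, and is therefore empty because $G$ is connected. Hence $H$ has no subgraph of minimum degree $\geq 3$, i.e.\ $H$ is $2$-degenerate, and the same is inherited by every subgraph. Note also that $H$ has at most two connected components (we deleted a single edge from a connected graph), each of size at least~$3$: the component containing $u$ contains $u$ together with its two distinct neighbours $a,b$, and likewise for $v$.

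For the construction itself I would carry out the recursion of \Cref{thm:2degenerate-strong} component by component, but arrange the peeling order so that $u$ and $v$ are removed first. I delete $u$ as a degree-$2$ vertex: if its removal keeps its component connected, this is the degree-two step (case~(b) in the proof of \Cref{thm:2degenerate-strong}); if it disconnects, then $u$ is a degree-$2$ cut vertex and I use the cut-vertex step (case~(c)). In either case the step builds a system for that component with properties \ref{item:i} and~\ref{item:ii} out of a system (or a pair of systems) having those properties for the smaller graph, and the new path created at this step is exactly $(a,u,b)$, of length $2$ with internal vertex $u$. Because $v$ still has degree $2$ in the component of the remaining graph containing it, I repeat the same peeling with $v$, producing $(c,v,d)$. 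All remaining vertices are handled by invoking \Cref{thm:2degenerate-strong} verbatim on each leftover component. The output $\calP$ then satisfies \ref{item:i} and~\ref{item:ii}, contains the two prescribed length-$2$ paths, and has size equal to $|V(H)|=n$, since each peeling step adds exactly one path and one vertex and the components partition the $n$ vertices of $H$; thus $|\calP|\le n$.

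The point requiring the most care — and the only place the non-triangle hypothesis is genuinely used — is checking that $u$ and $v$ may both legitimately be peeled as degree-$2$ vertices and that, when such a removal disconnects, both resulting pieces have at least three vertices so that \Cref{thm:2degenerate-strong} applies. The disjointness of the neighbourhoods of $u$ and $v$ keeps $v$ at degree $2$ after deleting $u$; and the absence of a triangle through $e$ forces $a\not\sim v$ and $b\not\sim v$, which (since $a$ and $b$ retain their other two neighbours) rules out a component of size less than $3$ splitting off when $u$ is a cut vertex, with the symmetric statement for $v$.
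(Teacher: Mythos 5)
Your proposal is correct and takes essentially the same approach as the paper: the paper likewise reduces to $G-u-v$ (checking each component is $2$-degenerate with at least three vertices, using connectivity, cubicness, and $G\neq K_4$), applies Theorem~\ref{thm:2degenerate-strong} to its components, and then performs exactly your two peeling steps in reverse — extending two paths to $u$, two to $v$, and adding the length-$2$ paths $(u_1,u,u_2)$ and $(v_1,v,v_2)$, with the non-triangle hypothesis guaranteeing the disjointness $\{u_1,u_2\}\cap\{v_1,v_2\}=\emptyset$ that keeps the two extensions independent. The only difference is presentational: you phrase the extensions as re-invoking cases (b)/(c) of the induction, while the paper carries them out explicitly.
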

\begin{proof}
  Denote the neighbors of~$u$ and~$v$ respectively by $u_1,u_2$ and
  $v_1,v_2$.  Because $e$ is not in a triangle, $\{u_1,u_2\}$ and
  $\{v_1,v_2\}$ are disjoint. Let $H'$ be the graph obtained by
  removing $u$ and~$v$ from $G$.  Observe that each component of $H'$ is
  2-degenerate, and has at least three vertices because $G$ is not
  $K_4$.  Then, we may apply Theorem~\ref{thm:2degenerate-strong} to
  each component of~$H'$, obtaining path systems with the stated
  properties. Joining these systems gives a strongly separating path
  system~$\calP'$ for~$H'$ with at most $n-2$ paths and the desired
  properties. Such system contains two paths ending at~$u_1$ and two
  paths ending at~$u_2$.  So, we can take two distinct paths $P_{u_1}$
  and~$P_{u_2}$ ending respectively at~$u_1$ and~$u_2$, and also two
  distinct paths $P_{v_1}$ and~$P_{v_2}$ ending respectively at~$v_1$
  and~$v_2$.  We extend $P_{u_1}$ to~$u$, and $P_{v_1}$ to $v$, and we
  extend $P_{u_2}$ to~$u$, and $P_{v_2}$ to $v$. Then, we just need to
  add the paths $(u_1,u,u_2)$ and $(v_1,v,v_2)$ of length $2$ to make
  sure all pairs of edges of $H$ are separated and then we obtain the desired
  strongly separating path system for~$H$ with $n$ paths.
\end{proof}

\section{Corollaries of the result for $2$-degenerate graphs}\label{sec3}

In this section we obtain some results that follow from those of the preceding section.

\begin{theorem}
  \label{thm:sub}
  If $G$ is a connected cubic $n$-vertex graph that is not $K_4$, then~$\ssp(G) \leq n$.
\end{theorem}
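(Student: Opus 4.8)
The plan is to reduce to Lemma~\ref{obs:2degenerate}. Since that lemma produces a separating system only for $H = G - e$ where $e$ is an edge of $G$ not lying in a triangle, the argument splits into two tasks: first, guaranteeing that such an edge $e = uv$ exists; and second, reinserting $uv$ into the system $\calP_H$ furnished by the lemma without exceeding the budget of $n$ paths.

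For the first task I would show that \emph{every} connected cubic graph other than $K_4$ contains an edge not in a triangle. Suppose not, so every edge lies in a triangle. Fix a vertex $v$ with neighbors $a,b,c$. For each of the three edges at $v$ to lie in a triangle, at least two of the three possible edges among $a,b,c$ must be present; and not all three can be present (otherwise $\{v,a,b,c\}$ would be a connected component isomorphic to $K_4$). Hence exactly one of them, say $a$, is adjacent to both others, so $\{v,a,b,c\}$ induces $K_4$ minus the edge $bc$, with $v$ and $a$ of full degree and $b,c$ each having a single edge leaving the set. But the edge from $b$ to its external neighbor $x$ could lie in a triangle only through a common neighbor of $b$ and $x$ inside $\{v,a\}$, and neither $v$ nor $a$ is adjacent to $x$; so that edge lies in no triangle, a contradiction. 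Thus a non-triangle edge $e=uv$ exists, and I apply Lemma~\ref{obs:2degenerate} to get a strongly separating system $\calP_H$ for $H=G-e$ with at most $n$ paths satisfying \ref{item:i} and \ref{item:ii}, containing length-two paths $(u_1,u,u_2)$ and $(v_1,v,v_2)$, where $\{u_1,u_2\}$ and $\{v_1,v_2\}$ are the (disjoint, since $e$ is not in a triangle) neighborhoods of $u$ and $v$ in $H$.

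To reinsert $uv$, I would delete these two length-two paths and replace them by two paths of length three through $uv$, pairing the ends either as $(u_1,u,v,v_1)$ and $(u_2,u,v,v_2)$, or crossed as $(u_1,u,v,v_2)$ and $(u_2,u,v,v_1)$; all four vertices in each such path are distinct because $\{u_1,u_2\}\cap\{v_1,v_2\}=\emptyset$. This keeps the number of paths at most $n$ and leaves $u$ and $v$ internal, so \ref{item:i} and \ref{item:ii} are even preserved. The new edge $uv$ is separated from every other edge $f$: it lies only in the two new paths, whose only common edge is $uv$, so $f$ lies in at most one of them and hence in some path avoiding $uv$, while at least one of the two new paths contains $uv$ but avoids $f$.

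The delicate point, which I expect to be the main obstacle, is checking that no previously separated pair is broken. Since the deleted length-two paths contained only the four edges at $u$ and $v$, the only pairs at risk involve these edges, and the genuinely problematic ones are the two ``partner'' pairs whose edges land together in a single new path (for the straight pairing, $\{uu_1,vv_1\}$ and $\{uu_2,vv_2\}$). For such a pair one must still find a path containing one edge but not the other; the surviving witness for, say, $uu_1$ against $vv_1$ is the $\calP_H$-path ending at $u$ through $uu_1$, which avoids $vv_1$ unless it also ends at $v$ through $vv_1$, i.e.\ unless that single path runs from $u$ to $v$. A short case analysis shows that the straight and crossed pairings cannot both create such a coincidence: each failure would force a path from $u$ to $v$ using a prescribed edge at $u$ and a prescribed edge at $v$, and requiring this for both pairings would force the two paths ending at $u$ (resp.\ $v$) to coincide, which they do not. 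Hence one pairing is safe, and choosing it yields a strongly separating path system for $G$ with at most $n$ paths, giving $\ssp(G)\le n$.
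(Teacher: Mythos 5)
Your proposal is correct and follows essentially the same route as the paper's own proof: find an edge $e=uv$ in no triangle, invoke Lemma~\ref{obs:2degenerate} on $G-e$, and re-route the two length-two paths $(u_1,u,u_2)$ and $(v_1,v,v_2)$ into two length-three paths through $uv$, where your choice between the ``straight'' and ``crossed'' pairings is exactly the paper's device of interchanging $v_1$ and $v_2$ when $P_1=Q_1$, justified by the same observation that both pairings failing would force two distinct paths ending at $u$ (or at $v$) to coincide. The only addition is your explicit argument that a connected cubic graph other than $K_4$ has an edge in no triangle, a fact the paper asserts without proof.
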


\begin{proof}
  As $K_4$ is the only connected cubic graph in which every edge lies
  in a triangle, $G$ contains an edge $e=uv$ that belongs to no
  triangle.  Then, $u$ and $v$ have disjoint neighborhoods
  in~$H := G-e$, which are denoted $\{u_1,u_2\}$ and $\{v_1,v_2\}$,
  respectively.  By Lemma~\ref{obs:2degenerate}, there is a strongly
  separating path system~$\calP'$ for $H$ with $n$ paths, which
  satisfies properties \ref{item:i} and~\ref{item:ii} from
  Theorem~\ref{thm:2degenerate-strong}. Furthermore, the family $\calP'$
  contains two paths, $P_u$ and~$P_v$, of length~$2$ with internal
  vertex $u$ and~$v$, respectively.  Our aim is to show that these
  paths of length~$2$ may be re-routed as paths of length~$3$ using
  $e$ in a way that preserves strong separation (see
  Figure~\ref{fig:second}).  Consider the two paths $P_1$ and $P_2$
  in $\calP'$ starting at $u$, and assume without loss of generality that $P_1$
  uses the edge $uu_1$ and $P_2$ uses the edge $uu_2$.  Let~$Q_1$ and
  $Q_2$ be the corresponding paths for $v$.  If~$P_1=Q_1$, then
  $P_1 \neq Q_2$ and $P_2 \neq Q_1$. In this case, interchange $v_1$
  and $v_2$ (and hence $Q_1$ and $Q_2$) so that $P_1 \neq Q_1$
  and~$P_2 \neq Q_2$.  Consider the path system
  $\mathcal{P}:=\mathcal{P}'\setminus\{P_u,P_v\}\cup\{(u_1,u,v,v_1),(u_2,u,v,v_2)\}$.
  We claim that~$\calP$ is strongly separating for~$G$. It is easy to
  see that $uv$ is strongly separated from any edge of~$H$. Moreover,
  any two edges that are strongly separated in~$H$ are clearly
  strongly separated by the same paths (or by the paths that replaced
  them), except for the pairs $\{uu_1,vv_1\}$ and $\{uu_2,vv_2\}$,
  which are separated by $P_u$ and $P_v$ in~$\mathcal{P}'$, but lie on
  the same new paths. However, our choice of re-routing guarantees
  that the pair $\{uu_1,vv_1\}$ is strongly separated by $P_1$
  and~$Q_1$, while the pair $\{uu_2,vv_2\}$ is strongly separated by
  $P_2$ and $Q_2$.
\end{proof}

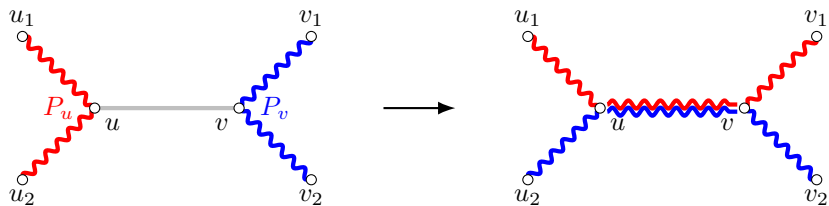
\begin{figure}[htbp]
  \centering
  \begin{tikzpicture}[scale=.95, every node/.style={font=\small}]
    \draw[lightgray, ultra thick] (-1,0) -- (1,0); 

    \draw[decorate, decoration={snake, segment length=6pt, amplitude=1.5pt}, red, ultra thick] (-2, 1) -- (-1,0);
    \draw[decorate, decoration={snake, segment length=6pt, amplitude=1.5pt}, red, ultra thick] (-2,-1) -- (-1,0);
    \node[red] at (-1.5,0) {$P_u$};

    \draw[decorate, decoration={snake, segment length=6pt, amplitude=1.5pt}, blue, ultra thick] (2, 1) -- (1,0);
    \draw[decorate, decoration={snake, segment length=6pt, amplitude=1.5pt}, blue, ultra thick] (2,-1) -- (1,0);
    \node[blue] at (1.5,0) {$P_v$};

    \filldraw[fill=white, draw=black] (-1,0) circle (2pt) node[anchor=north west] {$u$};
    \filldraw[fill=white, draw=black] ( 1,0) circle (2pt) node[anchor=north east] {$v$};
    \filldraw[fill=white, draw=black] (-2, 1) circle (2pt) node[above] {$u_1$};
    \filldraw[fill=white, draw=black] (-2,-1) circle (2pt) node[below] {$u_2$};
    \filldraw[fill=white, draw=black] (2, 1) circle (2pt) node[above] {$v_1$};
    \filldraw[fill=white, draw=black] (2,-1) circle (2pt) node[below] {$v_2$};

    \draw[thick, -{Latex}] (3,0) -- (4,0);

  \begin{scope}[xshift=7cm]
    \draw[decorate, decoration={snake, segment length=6pt, amplitude=1.5pt}, red, ultra thick]  (-0.9, 0.05) -- (0.9, 0.05);
    \draw[decorate, decoration={snake, segment length=6pt, amplitude=1.5pt}, blue, ultra thick] (-0.9,-0.05) -- (0.9,-0.05);
    \draw[decorate, decoration={snake, segment length=6pt, amplitude=1.5pt}, red, ultra thick] (-2, 1) -- (-1,0);
    \draw[decorate, decoration={snake, segment length=6pt, amplitude=1.5pt}, red, ultra thick] (2, 1) -- (1,0);
    \draw[decorate, decoration={snake, segment length=6pt, amplitude=1.5pt}, blue, ultra thick] (-2,-1) -- (-1,0);
    \draw[decorate, decoration={snake, segment length=6pt, amplitude=1.5pt}, blue, ultra thick] (2,-1) -- (1,0);
    \filldraw[fill=white, draw=black] (-1,0) circle (2pt) node[anchor=north west] {$u$};
    \filldraw[fill=white, draw=black] ( 1,0) circle (2pt) node[anchor=north east] {$v$};
    \filldraw[fill=white, draw=black] (-2, 1) circle (2pt) node[above] {$u_1$};
    \filldraw[fill=white, draw=black] (-2,-1) circle (2pt) node[below] {$u_2$};
    \filldraw[fill=white, draw=black] (2, 1) circle (2pt) node[above] {$v_1$};
    \filldraw[fill=white, draw=black] (2,-1) circle (2pt) node[below] {$v_2$};
  \end{scope}
\end{tikzpicture}
\caption{Illustration for the proof of Theorem~\ref{thm:sub}.}
\label{fig:second}
\end{figure}

We say that a graph is \emph{subcubic}
if all of its vertices have degree at most $3$. 
We obtain the following corollary from Theorem~\ref{thm:sub}.

\begin{corollary}\label{cor:subcubic}
  If $G$ is a subcubic $n$-vertex graph with $k$ connected components
  isomorphic to~$K_4$, then $\ssp(G) \leq n + k$.
\end{corollary}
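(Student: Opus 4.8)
The plan is to reduce the statement to a question about single connected components and then invoke the results already proved. Writing $G_1,\dots,G_t$ for the connected components of $G$ and $n_i=|V(G_i)|$, additivity of strong separation over components gives $\ssp(G)=\sum_{i=1}^t \ssp(G_i)$. It therefore suffices to show that $\ssp(G_i)\le n_i$ for every component $G_i\not\cong K_4$ and that $\ssp(K_4)\le 5=n_i+1$; summing these bounds makes each of the $k$ components isomorphic to $K_4$ contribute exactly one extra unit, which is the source of the additive term $k$.

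The structural heart of the argument is the following dichotomy: \emph{every connected subcubic graph is either $2$-degenerate or $3$-regular}. To prove it, I would suppose that a connected subcubic component $G_i$ is \emph{not} $2$-degenerate, so that some subgraph $H$ has minimum degree at least $3$. Since $G_i$ is subcubic, $H$ is subcubic as well, which forces $H$ to be $3$-regular. Then every vertex $v$ of $H$ satisfies $\deg_{G_i}(v)=\deg_H(v)=3$, so all edges of $G_i$ incident to $v$ already lie in $H$; hence $H$ is a union of connected components of $G_i$, and connectivity yields $H=G_i$. Thus $G_i$ is $3$-regular, establishing the dichotomy.

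With the dichotomy in hand I would classify each component. If $G_i$ is not $3$-regular, then it is $2$-degenerate, and Theorem~\ref{thm:2degenerate} gives $\ssp(G_i)\le n_i$ (this also absorbs the trivial components with $n_i\le 2$, which are $2$-degenerate). If $G_i$ is $3$-regular but not $K_4$, then Theorem~\ref{thm:sub} gives $\ssp(G_i)\le n_i$, noting that a cubic component has $n_i\ge 4$ and that $K_4$ is the unique cubic graph on four vertices. The only remaining case is $G_i\cong K_4$, which I would settle by an explicit system of five paths on $\{1,2,3,4\}$, namely $(1,2,3,4),\,(2,1,3),\,(3,4,2),\,(3,1,4),\,(3,2,4,1)$. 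A direct check shows each of the six edges lies in exactly two of these paths, with signatures $12\mapsto\{1,2\}$, $34\mapsto\{1,3\}$, $13\mapsto\{2,4\}$, $24\mapsto\{3,5\}$, $14\mapsto\{4,5\}$, $23\mapsto\{1,5\}$. These six two-element sets are pairwise distinct, hence pairwise incomparable, which is exactly the condition for every pair of edges to be strongly separated; thus $\ssp(K_4)\le 5$. Combining the cases gives $\ssp(G)=\sum_i\ssp(G_i)\le \sum_{G_i\not\cong K_4} n_i+\sum_{G_i\cong K_4}(n_i+1)=n+k$.

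I expect the main obstacle to be keeping the case analysis watertight rather than any hard calculation: the delicate point is the dichotomy, which guarantees that a connected subcubic graph cannot escape into some intermediate class untouched by Theorems~\ref{thm:2degenerate} and~\ref{thm:sub}, and the accompanying fact that $K_4$ is the \emph{only} exceptional component. It is precisely this uniqueness that justifies counting one surplus path per $K_4$-component, so once the dichotomy and the small $K_4$ computation are secured, the final summation is routine.
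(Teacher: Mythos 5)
Your proposal is correct and follows essentially the same route as the paper: decompose into connected components, spend five paths on each $K_4$ component, and handle the rest via Theorems~\ref{thm:2degenerate} and~\ref{thm:sub}, summing to $n+k$. The only difference is that you spell out two details the paper leaves implicit — the dichotomy that a connected subcubic graph is either $2$-degenerate or cubic, and an explicit five-path system witnessing $\ssp(K_4)\leq 5$ — both of which are verified correctly.
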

\begin{proof}
  Since $K_4$ requires $5$ paths to be strongly separated,
  we need $5k$ paths for the components isomorphic to~$K_4$.
  Furthermore, in view of Theorem~\ref{thm:sub}, we can strongly
  separate the remaining edges with $n - 4k$ paths,
  for a total of $5k + (n - 4k) = n + k$ paths.
\end{proof}

Outerplanar graphs always contain a vertex of degree at most~2,
and their class is closed under subgraphs, so they are 2-degenerate.
Therefore, the following is a direct corollary of Theorem~\ref{thm:2degenerate}.

\begin{corollary}\label{cor:outerplanar}
If $G$ is an outerplanar $n$-vertex graph, then $\ssp(G) \leq n$.
\end{corollary}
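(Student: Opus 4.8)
The plan is to deduce the bound directly from Theorem~\ref{thm:2degenerate} by verifying that every outerplanar graph is $2$-degenerate. Recall that $2$-degeneracy requires that \emph{every} subgraph contain a vertex of degree at most~$2$, so there are two ingredients to check: that outerplanarity is preserved under taking subgraphs, and that every outerplanar graph has a vertex of degree at most~$2$.

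The first ingredient is immediate: given an outerplanar embedding of $G$ (all vertices on the outer face), deleting vertices and edges leaves a valid outerplanar embedding of the subgraph, so the class is closed under subgraphs. For the second ingredient I would argue as follows. One might hope to count edges—an outerplanar graph on $m$ vertices has at most $2m-3$ edges—but combined with the handshake lemma this only forces a vertex of degree at most~$2$ when $m \le 5$; for $m \ge 6$ the bound is not strong enough (the triangular prism is $3$-regular with $6$ vertices and exactly $9 = 2\cdot 6 - 3$ edges, although it happens to be non-outerplanar). So instead I would complete $G$ to a maximal outerplanar graph $\hat G$ on the same vertex set by adding edges. If $G$ has at most two vertices it trivially has a vertex of degree at most~$1$; otherwise $\hat G$ is a triangulation of a polygon, and every such triangulation has at least two ``ears'', that is, vertices of degree~$2$. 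Since $G \subseteq \hat G$ on the same vertex set, the degree of any ear in $G$ is at most its degree in $\hat G$, namely at most~$2$, so $G$ has a vertex of degree at most~$2$.

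Combining the two ingredients, every subgraph of an outerplanar graph is outerplanar and hence has a vertex of degree at most~$2$; that is, outerplanar graphs are $2$-degenerate. The conclusion $\ssp(G) \le n$ then follows at once from Theorem~\ref{thm:2degenerate}. The only genuine content here is the degree fact for outerplanar graphs, which is classical, so there is no serious obstacle. The one thing to be careful about is precisely that a degree-sum inequality does not suffice for large $m$, so I would rely on the completion-to-a-triangulation argument rather than on edge counting alone.
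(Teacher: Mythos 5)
Your proof is correct and follows essentially the same route as the paper: the paper's proof of Corollary~\ref{cor:outerplanar} simply observes that outerplanar graphs have a vertex of degree at most~$2$ and are closed under subgraphs, hence are $2$-degenerate, and then invokes Theorem~\ref{thm:2degenerate}. The only difference is that you supply the standard justifications (closure under subgraphs, and the ear argument via completion to a polygon triangulation) that the paper takes for granted.
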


In 2005, Gon\c calves \cite{gonccalves2005edge} proved that every planar graph can be partitioned into two outerplanar graphs, resolving a conjecture of Chartrand, Geller, and Hedetniemi \cite{chartrand1971graphs}. As a result, we have:

\begin{corollary}
If $G$ is a planar $n$-vertex graph, then $\ssp(G) \leq 2n$.
\end{corollary}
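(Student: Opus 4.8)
The plan is to combine Gonçalves's partition theorem with Corollary~\ref{cor:outerplanar} through a straightforward union argument. First I would invoke the cited result of Gon\c{c}alves~\cite{gonccalves2005edge}: since $G$ is planar, its edge set decomposes as $E(G) = E(G_1) \cup E(G_2)$, where $G_1$ and $G_2$ are edge-disjoint outerplanar graphs, each on at most $n$ vertices (their vertex sets being subsets of $V(G)$). By Corollary~\ref{cor:outerplanar}, each $G_i$ then admits a strongly separating path system $\calP_i$ with $|\calP_i| \leq |V(G_i)| \leq n$.

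The heart of the argument is to show that $\calP := \calP_1 \cup \calP_2$ is a strongly separating path system for $G$. I would first record the elementary but crucial observation that every path in $\calP_i$ is a path of $G_i$, hence a path of $G$ whose edge set lies inside $E(G_i)$; in particular, because the partition is edge-disjoint, such a path avoids every edge of $G_{3-i}$. I also use that, since $\calP_i$ separates $G_i$, each edge of $G_i$ lies in at least one path of $\calP_i$.

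I would then verify separation for an arbitrary pair of edges $e, f$ of $G$ by cases according to which part each belongs to. If $e$ and $f$ lie in the same part $G_i$, then $\calP_i$ already separates them, and the witnessing paths, viewed inside $G$, still contain and avoid the correct edges. If $e \in E(G_1)$ and $f \in E(G_2)$, then any path of $\calP_1$ through $e$ automatically avoids $f$, and any path of $\calP_2$ through $f$ automatically avoids $e$; such paths exist by the observation above, so the pair is strongly separated. These cases are exhaustive, so $\calP$ strongly separates $G$, and hence $\ssp(G) \leq |\calP_1| + |\calP_2| \leq 2n$.

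The only genuine subtlety, and the step I would be most careful to state correctly, is the cross case: separation there comes essentially for free precisely because the two outerplanar parts share no edges. This is where the \emph{edge}-disjointness of Gon\c{c}alves's partition is essential, and it explains why a vertex partition or an overlapping edge cover would not yield the same conclusion.
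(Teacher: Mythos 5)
Your proposal is correct and follows essentially the same route as the paper's own (unpublished in the final text) proof: apply Gon\c{c}alves's edge-partition of a planar graph into two outerplanar graphs, invoke Corollary~\ref{cor:outerplanar} on each part, and observe that the union of the two systems strongly separates $G$ because the parts are edge-disjoint. The only microscopic caveat is your claim that separation forces every edge of $G_i$ to lie in some path of $\calP_i$, which fails in the degenerate case where $G_i$ has a single edge; this is repaired by simply choosing $\calP_i$ to contain that edge as a path, so it does not affect the argument.
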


Following Krisam~\cite{Krisam2021},
in every bipartite planar $n$-vertex graph, there are at most $n/2$ edges
whose removal results in a 2-degenerate graph.
This and Theorem~\ref{thm:2degenerate} imply the following.

\begin{corollary}
If $G$ is a bipartite planar $n$-vertex graph, then $\ssp(G) \leq 3n/2$.
\end{corollary}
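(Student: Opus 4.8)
The plan is to combine the edge-removal result attributed to Krisam~\cite{Krisam2021} with Theorem~\ref{thm:2degenerate}, and then to repair the separation of the deleted edges using trivial one-edge paths. First I would apply Krisam's result to obtain a set $F \subseteq E(G)$ with $|F| \leq n/2$ such that $H := G - F$ is $2$-degenerate. Since $H$ has $n$ vertices, Theorem~\ref{thm:2degenerate} yields a strongly separating path system $\calP'$ for $H$ with $|\calP'| \leq n$; indeed, invoking the stronger Theorem~\ref{thm:2degenerate-strong} on the components of $H$ of order at least $3$, we may assume that every edge of $H$ lies in at least one path of $\calP'$. Note also that every path of $\calP'$ is a path of $G$, because $H$ is a subgraph of $G$.

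Next I would augment $\calP'$ by adding, for each edge $e \in F$, the single-edge path $(e)$, setting $\calP := \calP' \cup \{(e) : e \in F\}$. Its size is at most $n + |F| \leq n + n/2 = 3n/2$, so it remains to verify that $\calP$ strongly separates $G$. Pairs of edges both lying in $H$ are already separated by $\calP'$. For two distinct edges $e, e' \in F$, the paths $(e)$ and $(e')$ separate them, since $(e)$ contains $e$ but not $e'$ and vice versa.

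The only pairs needing a word are those formed by an edge $e \in F$ and an edge $f \in E(H)$. Here $(e)$ contains $e$ but avoids $f$, while any path of $\calP'$ through $f$ (one exists, since $\calP'$ covers every edge of $H$) lies entirely in $H$ and hence automatically avoids $e \notin E(H)$; this handles the pair. Thus $\calP$ is a strongly separating path system for $G$ of size at most $3n/2$, as claimed.

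I do not expect a genuine obstacle once the two ingredients are in place; the point to watch is precisely that the edges of $F$ are \emph{absent} from $H$, which is exactly what lets the trivial one-edge paths separate them from everything in $H$ with no further bookkeeping. The only place one might hope to do better is by weaving the edges of $F$ into the longer paths of $\calP'$ rather than treating each in isolation, but that refinement is unnecessary for the bound $3n/2$.
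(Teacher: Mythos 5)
Your proof is correct and is essentially the argument the paper intends (the paper leaves it implicit): apply Krisam's result \cite{Krisam2021} to delete a set $F$ of at most $n/2$ edges leaving a $2$-degenerate graph, separate that graph with at most $n$ paths via Theorem~\ref{thm:2degenerate}, and add each edge of $F$ as a single-edge path. The coverage point you flag (every edge of $H$ lying in some path of $\calP'$) is the only subtlety, and your handling of it is sound --- indeed it is automatic from the definition of strong separation whenever $H$ has at least two edges.
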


In fact, Krisam~\cite{Krisam2021} conjectured that, in every bipartite planar
$n$-vertex graph~$G$, the removal of~$n/4$ edges would be enough
to obtain a 2-degenerate graph, which would imply that ${\ssp(G) \leq 5n/4}$.


\section{Bounds for bipartite graphs} \label{sec4}

As mentioned in the introduction, it was proved in \cite{FSMM25} that
$\ssp(K_{\frac{n}{2}, \frac{n}{2}})=(\sqrt{5/2} - 1 + o(1))n \approx 0.58\,n$.
This result works only for large $n$ and it does not apply to complete
bipartite graphs that are not almost regular.
We are interested in obtaining good bounds for all complete bipartite graphs. 

The main result of this section is twofold: for
$1 \leq a \leq b/2$, we show that $\ssp(K_{a,b}) \leq b$,
which in view of the fact that $\ssp(K_{a,b}) \geq \Delta(K_{a,b})$,
determines $\ssp(K_{a,b})$ in this range of~$a$.  On the other hand,
when $a > b/2$, it is no longer true that $\ssp(K_{a,b}) = b$,
and we provide a sequence of lower bounds on $\ssp(K_{a,b})$, for any
possible $a$ in this range, which matches exactly the values of
$\ssp(K_{a,b})$ for $a=b/2$ and $a=b$.

\begin{theorem}
  \label{thm:bip}
  The following hold for any positive integers $a$ and $b$ with $1 \leq a \leq b$.
  \begin{enumerate}
  \item If $a < b/2$, then $\ssp(K_{a,b}) = b$;\label{itemi}
  \item If $a \geq b/2$, then $\ssp(K_{a,b}) \geq (\sqrt{6(b/a)+4}-2)a$.\label{itemii}
  \end{enumerate}
\end{theorem}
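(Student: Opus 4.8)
The plan is to handle the two parts separately, but both rest on one observation. If $\calP$ strongly separates $K_{a,b}$, assign to each edge $e$ the set $S_e := \{P \in \calP : e \in P\}$ of paths through it. Strong separation of a pair $e,f$ means precisely that $S_e \not\subseteq S_f$ and $S_f \not\subseteq S_e$, and since \emph{every} pair (adjacent or not) must be separated, the family $\{S_e : e \in E(K_{a,b})\}$ is an antichain of $ab$ nonempty, pairwise distinct subsets of $\calP$. I will use this both to drive the lower bounds and to shape the construction.

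For \ref{itemi}, the bound $\ssp(K_{a,b}) \ge b$ is the already-noted inequality $\ssp \ge \Delta(K_{a,b}) = b$ (a vertex $x$ on the side $A$ of size $a$ has degree $b$; each path meets at most two edges at $x$, and the incomparability of the corresponding $S_e$ forces at least $b$ paths to touch $x$, hence $|\calP| \ge b$). The substance is the matching construction for $a < b/2$. Writing $A = \{x_1,\dots,x_a\}$ and $B = \{y_0,\dots,y_{b-1}\}$ with $B$-indices read in $\ZZ_b$, I use exactly $b$ paths $P_0,\dots,P_{b-1}$, each traversing all of $A$ in the fixed order $x_1,\dots,x_a$ with $B$-vertices interleaved by a single shifted offset pattern: setting $h_0=0$ and $h_s = (-1)^{s+1}\lceil s/2\rceil$, let $P_\ell = (y_{\ell+h_0}, x_1, y_{\ell+h_1}, x_2, \dots, x_a, y_{\ell+h_a})$. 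Because $b \ge 2a+1$, the offsets $h_0,\dots,h_a$ lie in an interval of length less than $b$ and so are distinct modulo $b$, making each $P_\ell$ a genuine path. A direct computation gives $S_{x_t y_j} = \{P_{j-h_{t-1}},\,P_{j-h_t}\}$, a $2$-element set whose indices differ by $h_t-h_{t-1}=(-1)^{t+1}t$; since $1\le t\le a<b/2$, the cyclic distance recovers $t$ and then $j$, so $e \mapsto S_e$ is injective. Thus all $ab$ signatures are distinct $2$-sets, hence pairwise incomparable, and $\calP$ strongly separates $K_{a,b}$. This is exactly the range where $\binom{b}{2} \ge ab$, i.e.\ where there is room for $ab$ distinct pairs.

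For \ref{itemii}, fix a strongly separating path system $\calP$ with $m := |\calP|$ and combine the global antichain with two quantitative inputs. First, a subset count: at most $m$ edges have $|S_e|=1$ and at most $\binom{m}{2}$ have $|S_e|=2$, so at least $ab - m - \binom{m}{2}$ edges satisfy $|S_e|\ge 3$. Second, a length budget: every path in $K_{a,b}$ meets at most $a$ vertices of the small side, hence has length at most $2a$, so
\[
\sum_{e} |S_e| = \sum_{P \in \calP} |E(P)| \le 2am .
\]
Charging $1$ to each edge with a small signature and $3$ to each edge with $|S_e|\ge 3$, and comparing with the budget, yields after rearrangement a quadratic inequality of the shape $m^2 + 4am \ge 6ab$, equivalent to $m \ge (\sqrt{6(b/a)+4}-2)a$. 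The tight cases are a sanity check: at $b=2a$ it reads $m \ge 2a = b$, matching \ref{itemi}, and at $a=b$ it reads $m \ge (\sqrt{10}-2)a$, matching the balanced value of \cite{FSMM25}.

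The main obstacle I anticipate is pinning down the exact constant in \ref{itemii}. The crude charging above in fact produces $m^2 + (4a+3)m \ge 6ab$; the extraneous linear term comes from treating the (at most $m$) singleton signatures wastefully, noting that edges with $|S_e|=1$ correspond precisely to single-edge paths. Removing this slack — either by arguing that such paths may be assumed absent, or by a more careful accounting that weighs the size-$1$ and size-$2$ counts jointly against the length budget — is the delicate point separating the stated bound from its first-order approximation. For \ref{itemi}, the corresponding care is in verifying simplicity and injectivity simultaneously: the choice $h_s=(-1)^{s+1}\lceil s/2\rceil$ is engineered so that the partial sums stay within an interval shorter than $b$ (keeping each $P_\ell$ simple) while the consecutive gaps run through $\pm1,\pm2,\dots$ with distinct absolute values (forcing $e\mapsto S_e$ to be injective).
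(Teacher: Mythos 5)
Your part~(i) is correct and is essentially the paper's own proof: your offsets $h_s=(-1)^{s+1}\lceil s/2\rceil$ are exactly a (recentred) graceful labeling of the path with $a$ edges, and the paper builds the same family of $b$ cyclic shifts of one interleaved pattern, verifying, just as you do, that every edge lies in exactly two paths and that the resulting $2$-element signatures are pairwise distinct. The lower bound $\ssp(K_{a,b})\ge\Delta(K_{a,b})=b$ is invoked there in the same way.

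Part~(ii), however, has a genuine gap, which you flag yourself but do not close. Bounding $e_1\le m$ and $e_2\le\binom{m}{2}$ \emph{separately} against the length budget $\sum_e|S_e|\le 2am$ gives only $m^2+(4a+3)m\ge 6ab$, and since the positive root of $x^2+cx-6ab$ is strictly decreasing in $c$, this is a strictly weaker conclusion than the stated bound, which is the root of $x^2+4ax-6ab$. The exact constant is the content of the theorem: the tightness claims at $a=b/2$ and $a=b$ fail for your weaker constant. Of the two fixes you propose, the first (``assume single-edge paths are absent'') is not available --- deleting a single-edge path can destroy separation, and nothing lets you reassign its separating role for free. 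The paper carries out your second suggestion, the joint accounting, as follows. (a)~If an edge $e$ lies in exactly one path $P$, then $P$ can contain no other edge $f$, since otherwise no path would contain $e$ but avoid $f$; hence $P=(e)$, and the $e_1$ such edges yield $e_1$ distinct single-edge paths (you noted this). (b)~The missing step: no edge $f$ counted by $e_2$ can have one of these $e_1$ paths in its signature, because such a path contains only its own edge, whose signature has size $1\neq 2$. Since distinct $e_2$-edges must also have distinct signature pairs (else they are not separated), $e_2\le\binom{p-e_1}{2}$ where $p=|\calP|$, i.e.\ the pairs are drawn only from the $p-e_1$ paths that are not forced singletons. (c)~Consequently $2e_1+e_2\le 2e_1+\binom{p-e_1}{2}\le p^2/2$, and combining this with the budget inequality $3ab-2e_1-e_2\le 2ap$ gives exactly $p^2+4ap-6ab\ge 0$. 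The point of (b)--(c) is that every unit of $e_1$ removes an entire path from the pool available to the $e_2$-edges; this trade-off is precisely what absorbs the extraneous linear term that your independent counting leaves behind.
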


In Figure~\ref{fig:lower}, we show the lower bounds given in
Theorem~\ref{thm:bip}.  For its proof, we will use a \emph{graceful
  labeling}, which, given a graph $G$ with $\ell$ edges, is an injective
assignment~$\phi$ of labels from $\{0,1,\dots,\ell\}$ to $V(G)$ such
that each edge $uv$ of $G$ induces a distinct value
$|\phi(u)-\phi(v)|$, which lies in~$[\ell]$, where
$[\ell]=\{1,\ldots,\ell\}$.

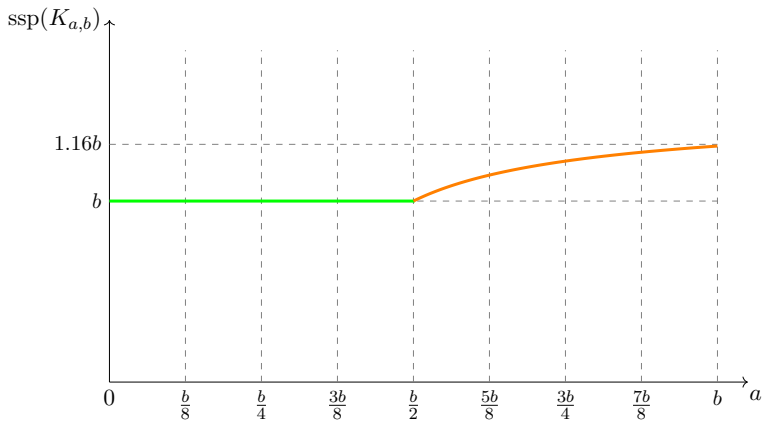
\begin{figure}[h!]
\centering
  \scalebox{.8}{
    \begin{tikzpicture}
  \draw[->] (0, 0) -- (10.5, 0) node[anchor=north] {\ \ $a$};
  \draw[->] (0, 0) -- (0, 6) node[anchor=east] {$\ssp(K_{a, b})$};

  \foreach \x in {1.25, 2.5, 3.75, 5, 6.25, 7.5, 8.75, 10} {
    \draw[dashed, gray] (\x, 0) -- (\x, 5.5);
  }
  \draw[dashed, gray] (0, 3) -- (10, 3); 
  \draw[dashed, gray] (0,  3.94) -- (10,  3.94); 

  \node[anchor=north] at (0, 0) {$0$};
  \node[anchor=north] at (1.25, 0) {$\frac{b}{8}$};
  \node[anchor=north] at (2.5, 0) {$\frac{b}{4}$};
  \node[anchor=north] at (3.75, 0) {$\frac{3b}{8}$};
  \node[anchor=north] at (5, 0) {$\frac{b}{2}$};
  \node[anchor=north] at (6.25, 0) {$\frac{5b}{8}$};
  \node[anchor=north] at (7.5, 0) {$\frac{3b}{4}$};
  \node[anchor=north] at (8.75, 0) {$\frac{7b}{8}$};
  \node[anchor=north] at (10, 0) {\small{$b$}};

  \node[anchor=east] at (0, 3) {\small{$b$}};
  \node[anchor=east] at (0, 3.97367) {\small{$1.16b$}};

  \draw[line width=0.5mm, green] (0, 3) -- (5.02, 3);

  \draw[line width=0.5mm, orange, domain=5:10, samples=100, smooth, variable=\x]
    plot ({\x}, {((\x-3.5) * (sqrt(6*(3/(\x-3.5))+4)-2))});
  \end{tikzpicture}
  }
  \caption{Lower bounds for $\ssp(K_{a,b})$ for all possible values
    of $a$ and $b$.}
  \label{fig:lower}
\end{figure}

\begin{proof}[Proof of Theorem~\ref{thm:bip}.]
  By~\cite[Theorem 5]{BCMP2016}, we know that $\ssp(K_{1,b})=b$, so that case (i) of Theorem~\ref{thm:bip} immediately holds for $a = 1$ and every $b \geq 3$. One may easily verify that case (ii) holds for $a=1$ and $b=2$. Moreover, for $a=b=2$, it is easy to see that $\ssp(K_{2,2})=4>2\sqrt{10}-4$, satisfying case (ii).
  
  Given $2 \leq a\leq b$ with $b \geq 3$, from now on we consider the complete
  bipartite graph $K_{a,b}$ with bipartition $\{u_0,u_1,\dots,u_{a-1}\}$,
  $\{v_0,v_1,\dots,v_{b-1}\}$.

  First, let $a < b/2$ and, for any $k\in\mathbb{N}$, define
  the path with $k$ edges $P(k):=(0,1,\dots,k)$. We pick an
  arbitrary graceful labeling $\phi$ of the path $P(a)$. 
  We show graceful labelings in Figure~\ref{fig:grace} for
  paths with even and odd number of edges.

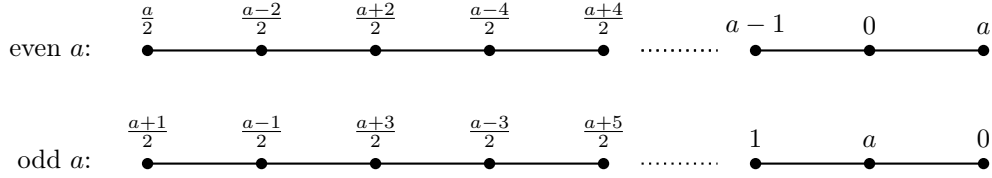
\begin{figure}[ht]
\centering
 \begin{tikzpicture}
    \tikzstyle{dot} = [circle, fill, black, inner sep=1.5pt]

    \draw[thick] (-11,0) -- (-5,0);
    \draw[thick] (-3,0) -- (0,0);
    \draw[thick, dotted] (-4.5,0) -- (-3.5,0);

    \node[anchor=east] at (-11.6,0) {\small even $a$:};
    \node[anchor=east] at (-11.6,-1.44) {\small odd $a$:};
    \node[dot, label=above:{\small$\frac{a}{2}$}] (m2) at (-11,0) {};
    \node[dot, label=above:{\small$\frac{a-2}{2}$}] (m2_minus_1) at (-9.5,0) {};
    \node[dot, label=above:{\small$\frac{a+2}{2}$}] (m2_plus_1) at (-8,0) {};
    \node[dot, label=above:{\small$\frac{a-4}{2}$}] (m2_minus_2) at (-6.5,0) {};
    \node[dot, label=above:{\small$\frac{a+4}{2}$}] (m2_plus_2) at (-5,0) {};

    \node[dot, label=above:{\small$a-1$}] (m_minus_1) at (-3,0) {};
    \node[dot, label=above:{\small$0$}] (zero) at (-1.5,0) {};
    \node[dot, label=above:{\small$a$}] (m) at (0,0) {};

    \draw[thick, dotted] (-4.5,0) -- (-3.5,0);


    \draw[thick] (-11,-1.5) -- (-5,-1.5);
    \draw[thick] (-3,-1.5) -- (0,-1.5);
    \draw[thick, dotted] (-4.5,-1.5) -- (-3.5,-1.5);

    \node[dot, label=above:{\small$\frac{a+1}{2}$}] (m2) at (-11,-1.5) {};
    \node[dot, label=above:{\small$\frac{a-1}{2}$}] (m2_minus_1) at (-9.5,-1.5) {};
    \node[dot, label=above:{\small$\frac{a+3}{2}$}] (m2_plus_1) at (-8,-1.5) {};
    \node[dot, label=above:{\small$\frac{a-3}{2}$}] (m2_minus_2) at (-6.5,-1.5) {};
    \node[dot, label=above:{\small$\frac{a+5}{2}$}] (m2_plus_2) at (-5,-1.5) {};

    \node[dot, label=above:{\small$1$}] (m_minus_1) at (-3,-1.5) {};
    \node[dot, label=above:{\small$a$}] (zero) at (-1.5,-1.5) {};
    \node[dot, label=above:{\small$0$}] (m) at (0,-1.5) {};
  \end{tikzpicture}
  \caption{Graceful labelings of the path $P(a)$.}
  \label{fig:grace}
\end{figure}

For $0 \leq j < b$, define
$P_j:=(v_{\phi(0)+j},u_0,v_{\phi(1)+j},u_1,\ldots,v_{\phi(a-1)+j},u_{a-1},v_{\phi(a)+j})$, 
where the indices of the $v$-vertices are taken modulo $b$. 
Furthermore, consider the family of paths 
$\mathcal{P}:=\{P_j : 0\leq j < b\}$ in~$K_{a,b}$.  
Look at Figure~\ref{fig:example} for such a collection for $K_{3,7}$, 
based on the graceful labeling described in Figure~\ref{fig:grace}.
We claim that $\mathcal{P}$ is a strongly separating path system for~$K_{a,b}$
and, since $|\mathcal{P}| = b$, this suffices to complete the proof of~\ref{itemi}.

\begin{figure}[ht]
\centering
 \begin{tikzpicture}
    \tikzstyle{dot} = [circle, fill, black, inner sep=1.5pt]

    \node[anchor=west] at (-4.3,6.5) {\small graceful labeling $\phi$:};
    \draw[thick] (-5,6) -- (-0.5,6);
    \node[dot, label=below:{\small$2$}] at (-5,6) {};
    \node[dot, label=below:{\small$1$}] at (-3.5,6) {};
    \node[dot, label=below:{\small$3$}] at (-2,6) {};
    \node[dot, label=below:{\small$0$}] at (-0.5,6) {};

    \node[dot, label=above:{\small$u_0$}] (u0) at (3.2,6.5) {};
    \node[dot, label=above:{\small$u_1$}] (u1) at (3.8,6.5) {};
    \node[dot, label=above:{\small$u_2$}] (u2) at (4.4,6.5) {};

    \node[dot, label=below:{\small$v_0$}] (v0) at (2,5.5) {};
    \node[dot, label=below:{\small$v_1$}] (v1) at (2.6,5.5) {};
    \node[dot, label=below:{\small$v_2$}] (v2) at (3.2,5.5) {};
    \node[dot, label=below:{\small$v_3$}] (v3) at (3.8,5.5) {};
    \node[dot, label=below:{\small$v_4$}] (v4) at (4.4,5.5) {};
    \node[dot, label=below:{\small$v_5$}] (v5) at (5,5.5) {};
    \node[dot, label=below:{\small$v_6$}] (v6) at (5.6,5.5) {};

    \draw[thick] (v2) -- (u0) -- (v1) -- (u1) -- (v3) -- (u2) -- (v0);
    \node[label={$P_0$}] at (4.7,5.6) {};

    \node[dot, label=above:{\small$u_0$}] (u0) at (-5.2,4) {};
    \node[dot, label=above:{\small$u_1$}] (u1) at (-4.6,4) {};
    \node[dot, label=above:{\small$u_2$}] (u2) at (-4,4) {};

    \node[dot, label=below:{\small$v_0$}] (v0) at (-6.4,3) {};
    \node[dot, label=below:{\small$v_1$}] (v1) at (-5.8,3) {};
    \node[dot, label=below:{\small$v_2$}] (v2) at (-5.2,3) {};
    \node[dot, label=below:{\small$v_3$}] (v3) at (-4.6,3) {};
    \node[dot, label=below:{\small$v_4$}] (v4) at (-4,3) {};
    \node[dot, label=below:{\small$v_5$}] (v5) at (-3.4,3) {};
    \node[dot, label=below:{\small$v_6$}] (v6) at (-2.8,3) {};

    \draw[thick] (v3) -- (u0) -- (v2) -- (u1) -- (v4) -- (u2) -- (v1);
    \node[label={$P_1$}] at (-3.6,3.1) {};

    \node[dot, label=above:{\small$u_0$}] (u0) at (-0.2,4) {};
    \node[dot, label=above:{\small$u_1$}] (u1) at (0.4,4) {};
    \node[dot, label=above:{\small$u_2$}] (u2) at (1,4) {};

    \node[dot, label=below:{\small$v_0$}] (v0) at (-1.4,3) {};
    \node[dot, label=below:{\small$v_1$}] (v1) at (-0.8,3) {};
    \node[dot, label=below:{\small$v_2$}] (v2) at (-0.2,3) {};
    \node[dot, label=below:{\small$v_3$}] (v3) at (0.4,3) {};
    \node[dot, label=below:{\small$v_4$}] (v4) at (1,3) {};
    \node[dot, label=below:{\small$v_5$}] (v5) at (1.6,3) {};
    \node[dot, label=below:{\small$v_6$}] (v6) at (2.2,3) {};

    \draw[thick] (v4) -- (u0) -- (v3) -- (u1) -- (v5) -- (u2) -- (v2);
    \node[label={$P_2$}] at (1.7,3.2) {};

    \node[dot, label=above:{\small$u_0$}] (u0) at (4.8,4) {};
    \node[dot, label=above:{\small$u_1$}] (u1) at (5.4,4) {};
    \node[dot, label=above:{\small$u_2$}] (u2) at (6,4) {};

    \node[dot, label=below:{\small$v_0$}] (v0) at (3.6,3) {};
    \node[dot, label=below:{\small$v_1$}] (v1) at (4.2,3) {};
    \node[dot, label=below:{\small$v_2$}] (v2) at (4.8,3) {};
    \node[dot, label=below:{\small$v_3$}] (v3) at (5.4,3) {};
    \node[dot, label=below:{\small$v_4$}] (v4) at (6,3) {};
    \node[dot, label=below:{\small$v_5$}] (v5) at (6.6,3) {};
    \node[dot, label=below:{\small$v_6$}] (v6) at (7.2,3) {};

    \draw[thick] (v5) -- (u0) -- (v4) -- (u1) -- (v6) -- (u2) -- (v3);
    \node[label={$P_3$}] at (7,3.2) {};

    \node[dot, label=above:{\small$u_0$}] (u0) at (-5.2,1.5) {};
    \node[dot, label=above:{\small$u_1$}] (u1) at (-4.6,1.5) {};
    \node[dot, label=above:{\small$u_2$}] (u2) at (-4,1.5) {};

    \node[dot, label=below:{\small$v_0$}] (v0) at (-6.4,0.5) {};
    \node[dot, label=below:{\small$v_1$}] (v1) at (-5.8,0.5) {};
    \node[dot, label=below:{\small$v_2$}] (v2) at (-5.2,0.5) {};
    \node[dot, label=below:{\small$v_3$}] (v3) at (-4.6,0.5) {};
    \node[dot, label=below:{\small$v_4$}] (v4) at (-4,0.5) {};
    \node[dot, label=below:{\small$v_5$}] (v5) at (-3.4,0.5) {};
    \node[dot, label=below:{\small$v_6$}] (v6) at (-2.8,0.5) {};

    \draw[thick] (v6) -- (u0) -- (v5) -- (u1) -- (v0) -- (u2) -- (v4);
    \node[label={$P_4$}] at (-3.6,0.7) {};

    \node[dot, label=above:{\small$u_0$}] (u0) at (-0.2,1.5) {};
    \node[dot, label=above:{\small$u_1$}] (u1) at (0.4,1.5) {};
    \node[dot, label=above:{\small$u_2$}] (u2) at (1,1.5) {};

    \node[dot, label=below:{\small$v_0$}] (v0) at (-1.4,0.5) {};
    \node[dot, label=below:{\small$v_1$}] (v1) at (-0.8,0.5) {};
    \node[dot, label=below:{\small$v_2$}] (v2) at (-0.2,0.5) {};
    \node[dot, label=below:{\small$v_3$}] (v3) at (0.4,0.5) {};
    \node[dot, label=below:{\small$v_4$}] (v4) at (1,0.5) {};
    \node[dot, label=below:{\small$v_5$}] (v5) at (1.6,0.5) {};
    \node[dot, label=below:{\small$v_6$}] (v6) at (2.2,0.5) {};

    \draw[thick] (v0) -- (u0) -- (v6) -- (u1) -- (v1) -- (u2) -- (v5);
    \node[label={$P_5$}] at (1.7,0.7) {};

    \node[dot, label=above:{\small$u_0$}] (u0) at (4.8,1.5) {};
    \node[dot, label=above:{\small$u_1$}] (u1) at (5.4,1.5) {};
    \node[dot, label=above:{\small$u_2$}] (u2) at (6,1.5) {};

    \node[dot, label=below:{\small$v_0$}] (v0) at (3.6,0.5) {};
    \node[dot, label=below:{\small$v_1$}] (v1) at (4.2,0.5) {};
    \node[dot, label=below:{\small$v_2$}] (v2) at (4.8,0.5) {};
    \node[dot, label=below:{\small$v_3$}] (v3) at (5.4,0.5) {};
    \node[dot, label=below:{\small$v_4$}] (v4) at (6,0.5) {};
    \node[dot, label=below:{\small$v_5$}] (v5) at (6.6,0.5) {};
    \node[dot, label=below:{\small$v_6$}] (v6) at (7.2,0.5) {};

    \draw[thick] (v1) -- (u0) -- (v0) -- (u1) -- (v2) -- (u2) -- (v6);
    \node[label={$P_6$}] at (7,0.7) {};
  \end{tikzpicture}
  \caption{Collection $\calP$ of paths for $K_{3,7}$.}
  \label{fig:example}
\end{figure}

Since $\phi$ is a graceful labeling, hence an injection, each element
of $\mathcal{P}$ is a copy of $P(2a)$. But, since $a<b/2$, each
edge $u_iv_j$ of $K_{a,b}$ appears in exactly two paths of
$\mathcal{P}$, namely in~$P_{j-\phi(i)}$ and in~$P_{j-\phi(i+1)}$,
where the indices are taken modulo $b$.

Let $u_iv_j$ and $u_{i'}v_{j'}$ be two edges of $K_{a,b}$ and suppose
towards a contradiction that they appear in the same two paths
of~$\calP$. If $i=i'$, then $j\neq j'$, so it must hold that
$j-\phi(i)=j'-\phi(i+1)$ and $j'-\phi(i)=j-\phi(i+1)$, both modulo
$b$, hence $2(\phi(i+1)-\phi(i))=0$ modulo $b$. But, since
$a<b/2$ and $|\phi(i+1)-\phi(i)| \in [a]$, the fact that
$\phi$ is a graceful labeling of $P(a)$ leads to a contradiction. On
the other hand, if $i\neq i'$, then, similarly, we have
$\phi(i+1)-\phi(i)=\phi(i'+1)-\phi(i')$ or
$\phi(i+1)-\phi(i)=\phi(i'+1)-\phi(i')$, both modulo~$b$.  Again,
since $a < b/2$, the above quantity resides in $[2a]$, yielding
a contradiction as $\phi$ is a graceful labeling.  We conclude that no
two edges share both of their paths in $\calP$ and therefore $\calP$
is a strongly separating path system for $K_{a,b}$.

Now let $a \geq b/2$ and let $\mathcal{P}$ be an arbitrary
strongly separating path system for $K_{a,b}$.  Let
$p:=|\mathcal{P}|$. Denote by $e_i$ the number of edges that appear in
exactly $i$ paths of $\mathcal{P}$. Then, we have
\begin{equation}\label{eq:1}
 3ab-2e_1-e_2 \ = \ e_1 + 2e_2 + 3(ab-e_1-e_2) \ \leq \ \sum_{i=1}^p i\cdot e_i\ \leq \ 2ap,
\end{equation}
where the left and the right side of this inequality are,
respectively, lower and upper bounds for the number of edges (with
multiplicity) in $\calP$. Moreover, every edge that appears in exactly
one path of $\calP$ must be itself a path in $\calP$, and each two
edges that appear in exactly two paths of $\calP$ must belong to a
different pair of paths, so
\begin{equation}\label{eq:21}
    e_2+2e_1 \ \leq \ {p-e_1\choose 2}+2e_1 \ = \ 
                   \frac{p^2}{2}+\frac{e_1}{2}(e_1-2p)+\frac{5e_1-p}{2}.
\end{equation}
Given that $e_1 \leq p$, the right side of~\eqref{eq:21} is bounded above by
$$\frac{p^2}{2}-\frac{e_1^2}{2}+2e_1,$$
which is at most $p^2/2$ for every $e_1 \geq 4$.

Next assume that $e_1 \leq 3$. Recall that $a \geq 2$ and $b \geq 3$, so that $e(K_{a,b})\geq 6$. There must be at least three other paths in the family to separate the edges that do not belong to a path of length 1, so that $p \geq e_1+3$. The upper bound in~\eqref{eq:21} is bounded above by
$$\frac{p^2}{2}-\left(\frac{e_1^2}{2}+3e_1\right)+\left(2e_1-\frac{3}{2}\right) \ < \ \frac{p^2}{2}.$$

In all cases, we get 
\begin{equation}\label{eq:2}
    e_2+2e_1\ \leq \frac{p^2}{2}.
\end{equation}
Combining \eqref{eq:1} and \eqref{eq:2}, we get $p^2+4ap-6ab \geq 0$,
which gives $p\geq (\sqrt{6(b/a)+4}-2)a$, concluding the proof of~\ref{itemii}.
\end{proof}

The lower bounds given in Theorem~\ref{thm:bip} are tight if $a = b/2$
or $a = b$.  It would be interesting to find tight examples for every $a$
with $b/2 < a < b$, or
to improve the lower bound in these cases.

\section*{Acknowledgements} 

We thank the reviewers for their valuable insights in the paper.

\bibliographystyle{plain}
\bibliography{main}

@mastersthesis{Krisam2021,
  author      = {Nadine D. Krisam},
  title       = {Maximal $k$-Degenerate Spanning Subgraphs},
  school      = {Karlsruhe Institute of Technology (KIT)},
  year        = {2021},
  type        = {Master's Thesis},
  url         = {https://i11www.iti.kit.edu/_media/teaching/theses/ma-krisam-21.pdf}
}

@Article{FSMM25,
  author = 		  {Cristina G. Fernandes and Guilherme O. Mota and Nicolás Sanhueza-Matamala},
  title = 		  {Separating path systems in complete graphs},
  journal = 	  {Random Structures \& Algorithms},
  year = 		  2025,
  volume = 	  66,
  number = 	  3,
  pages = 	  {e70006},
  DOI = {10.1002/rsa.70006}
}

@article{chartrand1971graphs,
  title={Graphs with forbidden subgraphs},
  author={Chartrand, Gary and Geller, Dennis and Hedetniemi, Stephen},
  journal={Journal of Combinatorial Theory, Series B},
  volume={10},
  number={1},
  pages={12--41},
  year={1971},
  publisher={Elsevier}
}

@inproceedings{gonccalves2005edge,
  title={Edge partition of planar graphs into two outerplanar graphs},
  author={Gon{\c{c}}alves, Daniel},
  booktitle={Proceedings of the 37th Annual ACM Symposium on Theory of Computing},
  pages={504--512},
  year={2005}
}

@misc{kontogeorgiou2024exactupperboundminimum,
      title={An exact upper bound for the minimum size of a path system that weakly separates a clique}, 
      author={George Kontogeorgiou and Maya Stein},
      year={2024},
      eprint={2403.08210},
      archivePrefix={arXiv},
      primaryClass={math.CO},
      eprint={2403.08210},
}

@article {BCMP2016,
    AUTHOR = {Balogh, J\'{o}zsef and Csaba, B\'{e}la and Martin, Ryan R. and Pluh\'{a}r,
              Andr\'{a}s},
     TITLE = {On the path separation number of graphs},
   OPTJOURNAL = {Discrete Appl. Math.},
  JOURNAL = {Discrete Applied Mathematics},
  OPTFJOURNAL = {Discrete Applied Mathematics. The Journal of Combinatorial
              Algorithms, Informatics and Computational Sciences},
    VOLUME = {213},
      YEAR = {2016},
     PAGES = {26--33},
      ISSN = {0166-218X},
   MRCLASS = {05C38},
  MRNUMBER = {3544565},
MRREVIEWER = {Jen\H{o} Lehel},
       DOI = {10.1016/j.dam.2016.05.022},
}

@Article{Letzter2024,
  author={Shoham Letzter},
  title={Separating paths systems of almost linear size}, 
  journal = 	 {Transactions of the American Mathematical Society},
  year = 	 2024,
  volume = 	 377,
  pages = 	 {5583--5615},
  DOI = 	 {10.1090/tran/9187}}

@article{BBDNS2023,
      title={Separating the edges of a graph by a linear number of paths}, 
      author={Marthe Bonamy and Fábio Botler and François Dross and Tássio Naia and Jozef Skokan},
      year={2023},
      journal={Advances in Combinatorics},
      note = {7pp},
      DOI = {10.19086/aic.2023.6},
}

\end{document}